\documentclass[11pt,a4paper]{article}
\usepackage[T1]{fontenc}
\usepackage{lmodern,amsmath,amssymb,amsthm,booktabs,microtype}
\usepackage{tikz}
\usetikzlibrary{arrows.meta}
\definecolor{edgeblue}{RGB}{32,97,151}
\definecolor{cutred}{RGB}{192,58,49}
\definecolor{fillblue}{RGB}{211,231,244}
\usepackage[margin=26mm]{geometry}
\usepackage[colorlinks=true,urlcolor=blue,citecolor=blue,linkcolor=blue]{hyperref}
\newtheorem{theorem}{Theorem}

\newtheorem{remark}{Remark}
\newtheorem{proposition}{Proposition}
\theoremstyle{definition}
\newtheorem{definition}{Definition}
\DeclareMathOperator{\conv}{conv}
\DeclareMathOperator{\diam}{diam}
\title{On degeneration of tetrahedra under longest-edge\texorpdfstring{ $n$}{ n}-section refinement}
\author{Sergey Korotov\\[3pt]
\small Department of Business and Mathematics\\
\small M\"alardalen University\\
\small Box 883, SE-721 23 V\"aster\aa s, Sweden\\
\small\texttt{sergey.korotov@mdu.se}}

\begin{document}
\maketitle
\vspace{-1em}
\begin{abstract}
For every integer $n\ge2$, we construct explicit tetrahedral counterexamples showing that repeated longest-edge (LE) $n$-section refinement does not, in general, preserve shape regularity, contrary to long-standing conjectural expectations. For $n=2$, the construction disproves the finite-family conjecture put forward by Adler in 1983 and the non-degeneracy conjecture for tetrahedral longest-edge bisection formulated by Rivara and Levin in 1992. It also shows that the sharper higher-dimensional diameter decay suggested by Stynes in 1983, following his 1980 planar finite-similarity result, cannot hold for arbitrary tetrahedra. For $n=3$, it disproves the published non-degeneracy conjecture for tetrahedral longest-edge trisection formulated in 2011 by Su\'arez, Abell\'on, Abad and Plaza on the basis of numerical experiments. The resulting infinite descendant sequences violate both the minimum and maximum angle conditions. For $n\ge3$, the selected edge is always the unique longest edge at every refinement step. The two-step recurrence used in the construction exhibits two distinct degeneration behaviors: a flat-tetrahedron regime for $2\le n\le5$ and a skinny-tetrahedron regime for $n\ge6$. The degenerating sequences can also be realized within conforming partitions generated by the conforming LE $n$-section algorithm. For both the classical and the conforming LE $n$-section algorithms, the maximal element diameter tends to zero as the number of refinement steps tends to infinity, regardless of which longest edge is chosen. Thus diameter convergence, and even conformity, do not prevent shape degeneration. These algorithms should therefore be used with care in applications requiring uniform shape regularity.
\end{abstract}

\medskip
\noindent\textbf{Mathematics Subject Classification (2020).}
65N50, 65N30.

\smallskip
\noindent\textbf{Keywords.}
Longest-edge $n$-section; tetrahedral partitions;
mesh degeneration; shape regularity; angle conditions; finite element method;
conforming partitions.

\section{Introduction}
Longest-edge refinement is a natural geometric rule for constructing nested simplicial partitions. It raises a basic question: can repeated subdivision reduce element size without unbounded deterioration of element shape? For triangles, a positive answer is provided by Rosenberg and Stenger~\cite{RS75} for longest-edge bisection and by Perdomo and Plaza~\cite{PP13} for longest-edge trisection. For $n\ge4$, by contrast, planar LE $n$-section can produce arbitrarily small angles~\cite{SMAP12}, and both its classical and conforming versions can generate maximum angles tending to $\pi$~\cite{KPS15}. In three dimensions, however, diameter convergence and shape regularity are distinct issues.

A natural generalization is longest-edge (LE) $n$-section, which divides a longest edge of a simplex into $n$ equal segments and joins the subdivision points to the remaining vertices, producing $n$ child simplices (tetrahedra in three dimensions). The trisection case is studied by Perdomo and Plaza~\cite{PP13}; general properties and a conforming variant are discussed by Korotov, Plaza and Su\'arez~\cite{KPS15,KPS}. Applied independently to individual elements, this subdivision need not preserve the face-to-face property of a partition, also called conformity. The central result of this paper is the construction, for every integer $n\ge2$, of an explicit infinite degenerating sequence of tetrahedra, each obtained from the preceding tetrahedron by one step of longest-edge $n$-section. For $n=2$, the construction recovers the example recently presented in~\cite{K}. In this paper, we consider two closely related algorithms. The classical LE $n$-section algorithm refines every tetrahedron at each refinement level, selecting a longest edge separately in each tetrahedron. The conforming LE $n$-section algorithm extends the conforming bisection strategy proposed by M.~K\v{r}\'i\v{z}ek and studied by Hannukainen, Korotov and K\v{r}\'i\v{z}ek~\cite{HKK}; its $n$-section extension is discussed in~\cite{KPS15,KPS}. This algorithm instead selects, at each step, an edge of maximal length in the entire current mesh and simultaneously refines all tetrahedra sharing that edge, which together form an edge patch. In both cases, the maximal element diameter tends to zero, so the algorithms generate families of partitions with vanishing discretization parameter, while retaining the degenerating sequence. We study these algorithms under the geometric longest-edge rule, without additional inherited edge labels or marking information, to determine what this rule alone guarantees, from bisection and trisection to general $n$-section.

Tetrahedral subdivision is fundamental to mesh generation and adaptive finite element methods~\cite{BKK20,KKK22}. Conformity permits the assembly of standard continuous finite element spaces, diameter reduction decreases the discretization scale, and shape regularity or suitable angle conditions provide classical sufficient hypotheses for interpolation estimates and finite element convergence results~\cite{BKK20,KKK22}. These roles must not be conflated. In particular, failure of a nodal interpolation estimate does not by itself imply divergence of a finite element method, because the finite element error is controlled by best approximation rather than by one prescribed interpolant~\cite{HKK12,KKK22}. Interpolation estimates are also of independent interest and, under suitable assumptions, provide a tool for establishing finite element convergence. Our counterexamples show that longest-edge $n$-sections can preserve conformity and reduce the maximal diameter while violating shape regularity and both the minimum and maximum angle conditions. They therefore rule out an appeal to those standard geometric hypotheses, without asserting failure of every finite element method on the resulting meshes. Moreover, discontinuous Galerkin methods can accommodate non-face-to-face partitions, which may naturally arise under the classical LE $n$-section algorithm; see, for example, Dolej\v{s}\'i and May~\cite[Section~1.4.2, p.~15]{DM22} for a DG computation on a mesh with hanging nodes.

In 1972, Sewell~\cite[Chapters~III--IV, pp.~15--28]{Sewell72} developed an adaptive marked-vertex midpoint-bisection procedure for compatible triangular partitions and demonstrated its use in finite element computations. In her 1984 paper, Rivara~\cite{Rivara84} developed longest-edge bisection as a systematic approach to adaptive refinement of conforming finite element meshes.

Simplex subdivision also appears in global optimization. In work presented at the 1975 IFIP Conference, Horst~\cite{HorstIFIP} explicitly replaced rectangular partitions by simplex partitions and used a refinement rule based on the midpoint of one of the longest edges. The fuller 1976 journal treatment~\cite{Horst76} developed the corresponding branch-and-bound framework, and Horst later returned to generalized bisection of $n$-simplices, allowing the bisection point to vary along a longest edge~\cite{Horst97}. Related simplex-bisection ideas occur in topological-degree and nonlinear-system methods developed by Kearfott~\cite{Kearfott77,Kearfott79}, Stynes~\cite{Stynes81}, Eiger, Sikorski and Stenger~\cite{ESS84}, and Sikorski~\cite{Sikorski01}. In these settings, diameter reduction localizes solutions but does not by itself control element shape.

\subsection{Historical background}
Three questions are central to the present study: convergence of diameters to zero, uniform shape regularity, and finiteness of descendant similarity classes. For a fixed initial simplex, finiteness of the similarity classes implies shape regularity, whereas diameter convergence alone does not. Diameter convergence itself also requires verification: Korotov and Michaud~\cite{KM26} show that, under largest-dihedral-angle bisection, edge stagnation can prevent the maximal element diameter from tending to zero.

For triangles, Rosenberg and Stenger~\cite{RS75} proved in 1975 that longest-edge bisection produces no angle smaller than half the smallest initial angle. In 1979, Stynes sharpened Kearfott's general mesh-diameter decay estimate for certain initial triangles~\cite{Stynes79}. In 1980, he extended the analysis to all triangles, proving an improved asymptotic diameter estimate, finiteness of the descendant similarity classes, and concentration of area in a distinguished set of shapes~\cite{Stynes80}. For longest-edge trisection of triangles, Perdomo and Plaza~\cite{PP13} proved non-degeneracy in 2013, while in 2014 Perdomo, Plaza, Quevedo and Su\'arez~\cite{PPQS14} obtained sharp quantitative upper and lower estimates for the decay of the maximal diameter under repeated trisection. Their result makes the planar distinction particularly clear: LE trisection both preserves shape regularity and drives the mesh diameter to zero. The tetrahedral situation considered here is different. The $n=3$ construction below still has vanishing diameter, but its shapes degenerate. Thus the favorable planar behavior of LE trisection does not extend to arbitrary tetrahedra.

In arbitrary dimension, Horst already stated in his IFIP contribution~\cite[Lemma~3, p.~333]{HorstIFIP}, and again in the fuller 1976 paper~\cite[Statement~2, p.~317]{Horst76}, that every nested sequence obtained by repeatedly bisecting a longest edge converges to a singleton\footnote{A singleton is a set consisting of exactly one point.}. In his 1977 dissertation, Kearfott~\cite[Definition~4.1.5 and Theorem~4.1.6, p.~39]{Kearfott77} defined bisection by halving a longest edge and stated that the diameters of all descendant simplices tend to zero; the dissertation announced that the proof would appear later. Chapter~V applies repeated simplex bisection to approximate solutions of $F(X)=0$ with $F:\mathbb R^n\to\mathbb R^n$, and Figure~5.1 on p.~87 explicitly illustrates bisection for $n=3$ by means of a tetrahedron. Kearfott's observation on p.~97 that the action of the algorithm depends on the shape of the initial simplex occurs in this root-finding discussion and is not a tetrahedral shape-regularity assertion. The complete convergence proof was published in 1978, when Kearfott~\cite[Theorem~3.1]{Kearfott78} obtained the quantitative estimate
\[
 h_{S_{k+p}}\le (\sqrt{3}/2)^{\lfloor p/d\rfloor}h_{S_k}.
\]
At the end of that paper, Kearfott~\cite[p.~1153]{Kearfott78} separately identified a positive lower bound for the ratio of the shortest to the longest edge during repeated bisection as an important related problem, observing that it had been solved for triangles. This distinction is pertinent here: the diameter estimate controls size but not shape. Moreover, in dimension three even a uniform lower bound for the shortest-to-longest edge ratio would not exclude sliver tetrahedra and therefore would not by itself establish shape regularity. Thus neither Kearfott's convergence theorem nor his closing remark asserts nondegeneration of arbitrary tetrahedra.
Horst's 1976 proof contains a further geometric assertion of direct relevance here. In the proof of Statement~4 he states that the maximal quotient $q_k$ of the longest-edge length of $S_k$ by a vertex-to-opposite-face distance is uniformly bounded along the nested sequence~\cite[p.~318]{Horst76}. For a tetrahedron this quotient is $h_T/a_{\min}(T)$, where $a_{\min}(T)$ denotes its smallest altitude, and hence it is a shape-regularity-type quantity. Section~\ref{sec:small} below shows explicitly that the bisection sequence $E_2(a_k)$ makes this quotient unbounded. Thus the geometric boundedness assertion used in that proof does not hold in general in dimension three, even though the diameter still tends to zero. The observation concerns this geometric step in Horst's proof; no separate claim about the full equicontinuity conclusion of his Statement~4 is needed here.

In 1981, Stynes~\cite[Sections~3--4]{Stynes81} studied generalized simplex bisection for topological-degree computation. He exhibited a difficulty with a straightforward sufficient-refinement procedure and introduced an impartial-bisection remedy with maximal-diameter convergence. These are size-convergence results, not shape-regularity theorems.

In 1983, Stynes~\cite[p.~43]{Stynes83} revisited longest-edge bisection in arbitrary dimension. He proved a bound of the form $h_{kd}\le C(\sqrt3/2)^k$ and, on the basis of computations, posed the problem of obtaining the sharper factor $1/2$ for dimensions $d>2$, namely $h_{kd}\le C2^{-k}$, as already known for triangles. Such a bound would imply nondegeneracy, since every level-$kd$ descendant satisfies
\[
 \frac{|T|}{h_T^d}
 \ge \frac{2^{-kd}|T_0|}{(C2^{-k})^d}
 =\frac{|T_0|}{C^d}>0.
\]
For $d=3$, cf.~\eqref{eq:shapevolume} below. Our $n=2$ counterexample rules out this sharper estimate for arbitrary tetrahedra with the permitted edge choices, while remaining compatible with the proved diameter bounds. Rivara~\cite[p.~606]{Rivara84} likewise noted in 1984 that higher-dimensional analogues of the planar angle and finite-shape results were unavailable apart from diameter convergence. These passages document early formulations of the problem; no absolute historical priority is claimed.

In the same year, Adler~\cite{Adler83} gave another proof of planar finiteness and formulated a higher-dimensional conjecture directly relevant to the present work. He considered the analogous longest-edge midpoint-bisection process for configurations of $d+1$ points in $d$ dimensions and conjectured that, up to similarity, every such configuration has a finite family~\cite[p.~574]{Adler83}. He observed that the problem already appeared difficult for four points in general position in three dimensions, while proving the conjecture for certain nearly equilateral tetrahedra. The degenerating $n=2$ sequence constructed below cannot belong to finitely many nondegenerate similarity classes and therefore disproves this finite-family conjecture in dimension three.

Positive results require additional hypotheses. In 1983, Adler~\cite{Adler83} bounded the number of similarity classes for a nearly equilateral tetrahedral family, and work published between 2021 and 2026 established exact or finite class results for other special families~\cite{STM21,TSP24,PTS25,MK}. Kearfott's 1987 root-isolation framework~\cite[Definition~2.3, p.~190]{Kearfott87a} assumes both diameter reduction and an inscribed-ball condition; its concrete verification and numerical tests concern boxes rather than tetrahedral longest-edge bisection~\cite{Kearfott87a,Kearfott87b}. Thus none of these results gives a regularity guarantee for arbitrary initial tetrahedra under the geometric rule considered here.

Very recently, Trujillo, Su\'arez and Moreno-Garc\'ia~\cite{TSM26} introduced a formulation of tetrahedral longest-edge bisection in terms of sextuples of squared edge lengths, for which the two child maps are linear. They formalized the multivalued process arising when the longest edge is not unique by means of permutation patterns and represented the refinement dynamics of the special $R_1^+$ and Liu--Joe families by the same directed graph with eight states. A state in this graph records permutation rules rather than a single similarity class, so a finite permutation graph is not by itself a shape-regularity result for arbitrary tetrahedra. Their multiform framework is relevant to the prescribed choices in the $n=2$ sequence below. For every $n\ge3$, however, the selected edge in our degenerating sequence is uniquely longest, showing that nonuniqueness of the longest edge is not essential to the degeneration mechanism.

\subsection{Related evidence and scope}
In 1992, Rivara and Levin~\cite{RL} explicitly conjectured, for tetrahedral longest-edge bisection, that the solid angles do not degenerate as repeated refinement proceeds to infinity, and they presented empirical evidence in support of this conjecture. They also emphasized that no mathematical result then ensured non-degeneracy in three dimensions. Their numerical tests suggested stabilization of the minimum solid angle and, for several distorted inputs, improvement of tetrahedral quality. The $n=2$ construction below gives a counterexample to this conjecture. Numerical investigations also reported favorable behavior for other tetrahedral procedures: the eight-tetrahedra longest-edge partition~\cite{PPS}, face-to-face bisection~\cite{HKK}, and trisection-based algorithms~\cite{SAAP,KPSA}. The optimism concerning tetrahedral trisection was also natural in view of the planar theory: longest-edge trisection of triangles is nondegenerate~\cite{PP13}, and its maximal diameter is known to converge to zero with sharp quantitative bounds~\cite{PPQS14}. In 2011, Su\'arez, Abell\'on, Abad and Plaza~\cite{SAAP} introduced a tetrahedral LE-trisection refinement algorithm and tested it on regular, cap, needle, sliver and wedge-shaped tetrahedra using several shape measures. The degenerating types in this list, together with several related shapes, are illustrated in~\cite[Fig.~1]{KKK22}. After thirteen global refinement levels they reported positive observed quality values and, on this empirical basis, explicitly conjectured non-degeneracy of iterative tetrahedral LE trisection. The $n=3$ construction below gives a counterexample to that conjecture. In particular, Hannukainen, Korotov and K\v{r}\'i\v{z}ek~\cite{HKK} proved the equivalence of regularity and strong regularity for their generated families, while the regularity itself remained experimentally supported rather than proved in general. Earlier investigations by Su\'arez, Trujillo and Moreno~\cite{STM21} computed similarity classes for particular tetrahedral families, and Trujillo-Pino, Su\'arez and Padr\'on~\cite{TSP24} proved finiteness for nearly equilateral tetrahedra. Subsequently, Michaud and Korotov~\cite{MK} proved finiteness for further particular families and supplemented their results numerically. Such finite computations cannot determine the behavior of every infinite descendant sequence, and the algorithms must not be identified with unrestricted repeated subdivision of one geometric longest edge.

Controlled bisection procedures can nevertheless provide quality guarantees. Liu and Joe~\cite{LJ} proved finiteness of the generated similarity classes and a uniform shape-quality bound for their procedure. It is not unrestricted longest-edge bisection: they state explicitly on p.~147 that the prescribed edge in a subtetrahedron need not be its longest edge.

Degeneration for $n\ge4$ was already known. In the plane, longest-edge trisection was proved nondegenerate in 2013~\cite{PP13}, whereas in 2012 Su\'arez, Moreno, Abad and Plaza~\cite{SMAP12} had shown that $n$-section with $n\ge4$ produces arbitrarily small angles; the classical and conforming planar versions were later shown to generate maximum angles tending to $\pi$~\cite{KPS15}. In higher dimensions, Su\'arez and Moreno~\cite[Theorem~2(iii)]{SM15} proved decay of a minimum solid-angle measure, and Korotov, Plaza, Su\'arez and Moreno~\cite[Theorems~9--11]{KPSM19} established diameter convergence, failure of the generalized Zl\'amal minimum angle condition, and infinitely many similarity classes for classical $n$-section with $n\ge4$. The generalized criterion is equivalent to shape regularity~\cite{BKK11}. Those results do not cover $n=2,3$ and do not establish the maximum angle failure proved here.

Accordingly, degeneration for $n\ge4$ alone is not claimed as new. The contribution is a common explicit tetrahedral construction for every $n\ge2$, with verified edge choices, a two-step coordinate recurrence, quantitative degeneration, and explicit angle limits demonstrating failure of both the minimum and maximum angle conditions. It includes the bisection sequence of~\cite{K}; for $n=2$ it disproves Adler's finite-family conjecture in dimension three~\cite{Adler83} and the tetrahedral solid-angle non-degeneracy conjecture of Rivara and Levin~\cite{RL}; for $n=3$ it disproves the tetrahedral LE-trisection non-degeneracy conjecture of Su\'arez, Abell\'on, Abad and Plaza~\cite{SAAP}; and for $n\ge4$ it supplies an alternative mechanism with additional conclusions on the failure of the angle conditions. The conforming realization tracks the bad descendants through the conforming LE $n$-section algorithm, following the planar strategy of~\cite{KPS15} but adding a tetrahedral compatibility argument and explicit treatment of equal longest edges.

After the earlier counterexample~\cite{K} appeared on arXiv, Adiprasito, Kalmanovich and Solomon~\cite{AKS} presented independently obtained results using a dynamical-systems approach. Their work includes tetrahedral examples with uniquely longest edges, degeneration for an open set of four-dimensional simplices, and probabilistic results in higher dimensions. They also construct branches that stay in a compact subset of nondegenerate shape space while containing infinitely many similarity classes~\cite[Theorem~1.6]{AKS}; hence infinitely many shapes do not by themselves imply degeneration. For $n=2$, two longest edges are repeatedly equal; for every $n\ge3$, the longest edge is unique.

Related difficulties occur when other planar refinement ideas are lifted to three dimensions. Red refinement preserves triangle shape, whereas tetrahedral red refinement introduces choices when its central octahedron is divided. Zhang~\cite{Zhang95} gave an early analysis of successive tetrahedral subdivisions. Korotov, K\v{r}\'i\v{z}ek and Ku\v{c}era~\cite{KKK22} subsequently studied degenerating families produced by inappropriate choices in red refinement, including failure of the maximum angle condition, and the consequences of angle degeneration for interpolation. Uniform red refinement preserves the face-to-face property of an initially conforming partition, so this deterioration can occur even when conformity is maintained.

Section~2 defines the regularity framework and the two refinement algorithms, and states the main theorem. Section~3 describes the common two-step selection. Sections~4 and~5 analyze the regimes $2\le n\le5$ and $n\ge6$, respectively. The later sections explain the threshold, prove diameter and scale-dependent bounds, embed the bad sequence in conforming partitions, and discuss consequences and open problems.

\section{Geometric framework and refinement algorithms}
\subsection{Partitions and regularity criteria}

Let $\Omega\subset\mathbb R^3$ be a bounded polyhedral domain. We use the family and angle-condition framework (see, e.g.,~\cite{BKK,KK24}) and the following explicit notation. The term `partition' alone imposes no shape regularity or angle conditions; for families of partitions, convergence of the maximal element diameter to zero is stated separately.

\begin{definition}[Partitions and discretization parameter]
A tetrahedral partition $\mathcal T$ of $\Omega$ is a finite collection of closed nondegenerate tetrahedra with disjoint interiors and union $\overline\Omega$. It is \emph{conforming} (face-to-face) if the intersection of any two distinct elements is empty or a complete common vertex, edge, or triangular face. Write
\[
 h_T=\diam T,\qquad h(\mathcal T)=\max_{T\in\mathcal T}h_T.
\]
An infinite sequence $\mathcal F=\{\mathcal T_j\}_{j\ge0}$ is a family with vanishing discretization parameter if $h_j:=h(\mathcal T_j)\to0$. Equivalently, it may be denoted by $\{\mathcal T_h\}_{h\to0}$; repeated values of $h_j$ are allowed. Conformity is stated separately. A refinement sequence is nested when every new element lies in an element of the preceding partition. For a nested sequence, $h_j$ is nonincreasing, so $h_j\to0$ is equivalent to the existence, for every $\varepsilon>0$, of a partition in the sequence with maximal diameter smaller than $\varepsilon$.
\end{definition}

\begin{definition}[Minimum and maximum angle conditions]
Let $\mathcal A(T)$ contain the twelve planar angles of the four triangular faces and the six interior dihedral angles of $T$. A family satisfies the \emph{minimum angle condition} if a constant $\alpha_0>0$, independent of the partition and element, exists such that
\begin{equation}\label{eq:minangle}
 \theta\ge\alpha_0\quad\text{for all }j,\ T\in\mathcal T_j,\ \theta\in\mathcal A(T).
\end{equation}
It satisfies the \emph{maximum angle condition} if a constant $\alpha_1<\pi$, independent of the partition and element, exists such that
\begin{equation}\label{eq:maxangle}
 \theta\le\alpha_1\quad\text{for all }j,\ T\in\mathcal T_j,\ \theta\in\mathcal A(T).
\end{equation}
These conditions control both the planar angles of the triangular faces and the interior dihedral angles; see, e.g.,~\cite{KK24}.
\end{definition}

Writing $r_T$ for the inradius, shape regularity means $h_T/r_T\le C$ uniformly over the family. This is equivalent to the normalized-volume condition
\begin{equation}\label{eq:shapevolume}
 \frac{|T|}{h_T^3}\ge c>0,
\end{equation}
where $c$ is independent of the element and partition, and to the minimum angle condition~\eqref{eq:minangle}~\cite{BKK,BKK11}. Throughout this paper, \emph{shape regularity} refers specifically to this uniform bound; broader expressions such as \emph{mesh regularity} are used only when discussing terminology or results from the cited literature. The maximum angle condition allows some shape degeneration; the minimum condition implies the maximum condition~\cite{KK24}, but their failures require separate verification.

To compare with the generalized Zl\'amal criterion in~\cite{KPSM19}, let $F_j$ be the face opposite vertex $A_j$ of $T$. Eriksson's vertex sine~\cite{Eriksson78} is
\[
 \sin_3(\widehat A_i\mid T)=\frac{9|T|^2}{2\prod_{j\ne i}|F_j|},
\]
where $|F_j|$ is face area. A uniform positive lower bound at every vertex is equivalent to shape regularity~\cite[Theorem~1, Corollary~1 and Remark~4]{BKK11}. This quantity is distinct from ordinary solid-angle measure. Thus the shape degeneration below also violates the generalized Zl\'amal criterion, while maximum angle failure follows from the explicit angle limits.

\subsection{Two distinct refinement algorithms}\label{sec:algorithms}
We use the same notation for a point and its position vector with respect to the origin.
Fix an integer $n\ge2$. For a tetrahedron $T=\conv\{A,B,C,D\}$ and an edge $e=AB$, set
\[
 P_j=(1-j/n)A+(j/n)B\quad(0\le j\le n),\qquad
 \mathcal N_n(T,e)=\bigl\{\conv\{P_{j-1},P_j,C,D\}:1\le j\le n\bigr\}.
\]
This is the local edge $n$-section operation on $T$ with respect to the edge $e=AB$. The two mesh algorithms below use the same operation but different edge selection and scheduling rules.

\begin{definition}[Classical LE $n$-section algorithm]\label{def:G}
Start from a finite tetrahedral partition $\mathcal G_0$. Given $\mathcal G_k$, choose independently for every $T\in\mathcal G_k$ an edge $e_T$ satisfying $|e_T|=h_T$, and set
\[
 \mathcal G_{k+1}=\bigcup_{T\in\mathcal G_k}\mathcal N_n(T,e_T).
\]
One iteration refines \emph{every element of the current partition exactly once}. Consequently,
\[
 \#\mathcal G_{k+1}=n\,\#\mathcal G_k,\qquad
 \#\mathcal G_k=n^k\#\mathcal G_0.
\]
In particular, the number of elements doubles at each refinement level when $n=2$. Choices among equally long edges may be made arbitrarily. There is no compatibility or closure step between neighboring elements. The resulting partition is nested, but need not be conforming even if $\mathcal G_0$ is conforming. The index $k$ counts complete refinement levels.
\end{definition}

\begin{definition}[Conforming LE $n$-section algorithm]\label{def:P}
Start from a finite conforming tetrahedral partition $\mathcal P_0$. Given $\mathcal P_m$, let $\mathcal E(\mathcal P_m)$ be its set of edges and choose
\[
 e_m\in\mathcal E(\mathcal P_m),\qquad
 |e_m|=\max_{e\in\mathcal E(\mathcal P_m)}|e|=h(\mathcal P_m).
\]
For the full incident patch $\omega_m(e_m)=\{T\in\mathcal P_m:e_m\text{ is an edge of }T\}$, set
\[
 \mathcal P_{m+1}=
 \bigl(\mathcal P_m\setminus\omega_m(e_m)\bigr)
 \ \cup\!\bigcup_{T\in\omega_m(e_m)}\mathcal N_n(T,e_m).
\]
Use the same subdivision points on $e_m$ in every incident tetrahedron. All elements outside the patch remain unchanged. Any globally longest edge may be selected, and the procedure is iterated infinitely many times. One iteration is one complete patch operation, not a complete mesh-refinement level. The partitions remain conforming, as proved in Section~\ref{sec:conforming}.
\end{definition}

In the classical LE $n$-section algorithm, ``longest'' is tested separately in each element; in the conforming LE $n$-section algorithm it is tested over the entire mesh. The selected edge in the conforming LE $n$-section algorithm is consequently a longest edge of every incident tetrahedron. Choosing merely a longest edge of an arbitrarily marked element, and then refining its patch, is a different algorithm: the edge need not be a longest edge of the neighboring tetrahedra. No result here is asserted for that variant without further hypotheses.

Propositions~\ref{prop:diameter} and~\ref{prop:patchdiameter} show, respectively, that
\[
 h(\mathcal G_k)\to0,\qquad h(\mathcal P_m)\to0.
\]
These statements hold for all permitted edge choices. In contrast, realization of a particular degenerating branch is an existence statement with its specified choices. For the classical LE $n$-section algorithm, prescribe those choices in the tracked element at each level; for the conforming LE $n$-section algorithm, use the scheduling established in Theorem~\ref{thm:conforming}.

\subsection{Main counterexample}
\begin{theorem}\label{thm:main}
For each integer $n\ge2$, there is a nondegenerate tetrahedron and an infinite nested descendant sequence generated by longest-edge $n$-section which fails shape regularity and both angle conditions~\eqref{eq:minangle} and~\eqref{eq:maxangle}. For $n\ge3$, the longest edge is unique at every step. For $n=2$, two longest edges are repeatedly equal.
\end{theorem}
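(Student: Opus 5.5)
The plan is to construct an explicit one-parameter family of tetrahedra $E_n(a)$ that is invariant, up to similarity, under two consecutive longest-edge $n$-section steps. The two-step map then reduces to a scalar (or low-dimensional) recurrence $a_{k+1}=\Phi_n(a_k)$ whose iterates drive the shape parameter to a degenerate limit. Concretely, I would place the tetrahedron as $\conv\{A,B,C,D\}$ with $AB$ a longest edge, say $A=(0,0,0)$, $B=(1,0,0)$, and $C,D$ depending on a parameter $a$ (with a second free coordinate if needed). After the first $n$-section of $AB$ I would select a specific child, typically the end child $\conv\{P_{n-1},B,C,D\}$ or $\conv\{A,P_1,C,D\}$, whose longest edge is, say, $CD$ or an edge through $C$. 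After $n$-sectioning that edge, I would select again an end child. The requirement is that this grandchild is similar to $E_n(a')$ for an explicit $a'=\Phi_n(a)$. I would determine the family by writing the squared edge lengths of the grandchild as linear functions of the squared edge lengths of the parent, in the spirit of the sextuple formulation of~\cite{TSM26}, and imposing the matching of two edge-length patterns.

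The key steps, in order, are as follows.
\begin{enumerate}
\item Compute the six squared edge lengths of the child and of the grandchild via the median-type formula $|P_jX|^2=(1-t)|AX|^2+t|BX|^2-t(1-t)|AB|^2$ with $t=j/n$.
\item Verify the edge-choice inequalities: the selected edge must be a longest edge at both steps. For $n\ge3$ it must be \emph{strictly} longest; for $n=2$ two longest edges tie, which the classical algorithm permits.
\item Rescale and identify the grandchild with $E_n(a')$, yielding $\Phi_n$.
\item Analyze the dynamics of $\Phi_n$ on an invariant interval, showing monotone convergence of $a_k$ to a boundary value where the tetrahedron is degenerate. Quantitatively, show $|T_k|/h_{T_k}^3\to0$, which by~\eqref{eq:shapevolume} kills shape regularity and hence~\eqref{eq:minangle}.
\item Compute the limiting configuration explicitly to exhibit a face or dihedral angle tending to $\pi$, which refutes~\eqref{eq:maxangle}.
\end{enumerate}
I expect two regimes. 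For $2\le n\le5$ the limit should be flat, with all four vertices tending to a plane and some dihedral angle tending to $\pi$. For $n\ge6$ it should be skinny (needle-like, with one edge dominating), and the maximum-angle failure should then come from a planar face angle tending to $\pi$, or a dihedral angle, as the needle collapses onto a segment with an interior point. The threshold would appear as the sign change of a coefficient in $\Phi_n$ or in the linearization at the fixed point.

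The main obstacle is step~2 combined with step~4. The edge-order inequalities must hold along the entire orbit, not just at the initial parameter, so I need an interval $I_n$ with $\Phi_n(I_n)\subset I_n$ on which every inequality is verified uniformly in $n$. I would first try to make the inequalities polynomial in $a$ and $1/n$ and check them on the closed invariant interval, including its degenerate endpoint by continuity with strict margins away from it. Near the degenerate endpoint, ties among edge lengths might appear. If they do, I would use a second parameter to separate the lengths. Finally, nestedness and nondegeneracy at each finite step are immediate, since each child is a nondegenerate subset of its parent.
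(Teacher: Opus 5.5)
\textbf{Verdict: there is a genuine gap.} Your outline has the right overall shape: two-step invariance, reduction to a recurrence, a flat regime for $2\le n\le5$ and a skinny one for $n\ge6$. But it is a search plan, not a proof. The entire content of the theorem is the existence of specific child choices and initial geometry for which the prescribed edges stay longest forever while the shape degenerates, and your steps 2--4 leave exactly this open.

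\textbf{The missing idea.} You need a stencil that makes the two-step map exactly linear on a normal form. The paper keeps the tetrahedron in the form ``apex $A$ vertically above base vertex $B$, face $BCD$ horizontal''. It $n$-sections $AD$ and keeps $\conv\{M,B,C,D\}$, then $n$-sections $BD$ and keeps $\conv\{M,N,P,C\}$, where $PN$ is the \emph{second} subsegment from $D$. Since $M$ and $N$ divide $AD$ and $BD$ in the same ratio, $MN\parallel AB$. After translation by $-N$ the normal form recurs, with the height multiplied by $1/n$ and the base updated by $C'=-D/n$, $D'=C-tD$; see \eqref{eq:map}.

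The families then come from the eigenstructure of this $2\times2$ map. Complex eigenvalues ($2\le n\le5$) permit a base whose Gram matrix is scaled exactly by $1/n$; this is where $\Delta_n>0$ enters, and it gives $E_n(a)\to E_n(a/n)$. Real eigenvalues $-q,-p$ ($n\ge6$) give $F_k$ in eigenvector coordinates. In your sextuple language, $E_n(a)$ has squared lengths $a(0,1,n,1,n,2)+a^2(1,1,1,0,0,0)$. This is a combination of eigenvectors of the two-step linear map with eigenvalues $1/n$ and $1/n^2$, and the dominant one is the degenerate pattern $|AB|=0$. That is what your step~3 would have to discover; the edge inequalities are then explicit polynomials in $a$.

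\textbf{Two parts of your plan would fail as stated.}

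First, selecting an \emph{end} child at the second step breaks this mechanism for $n\ge3$. The child at the $D$-end of $BD$ retains both $C$ and $D$, and so does every later cycle. The edge $CD$ would then be inherited forever and never subdivided, which contradicts Proposition~\ref{prop:diameter}, so the prescribed edges cannot stay longest. The child at the $B$-end places the apex above $tD$ rather than above a vertex, so the normal form is lost. (For $n=2$ this child coincides with the paper's, because $P=B$.) You would then need a different invariant family, which the proposal does not supply.

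Second, your step~5 looks for an angle tending to $\pi$ in the two-step-invariant (even) family. In the flat regime, however, the even members $E_n(a)$ satisfy the maximum angle condition:
\begin{itemize}
\item the dihedral angle at $AB$ equals $\angle CBD$;
\item the dihedral angle at $CD$ tends to $0$;
\item the other four dihedral angles equal or tend to $\pi/2$;
\item every face angle is a right angle or tends to $\pi/2$, to $0$, or to a fixed angle of the base triangle $BCD$.
\end{itemize}
The angle tending to $\pi$ is the dihedral angle at $MC$ of the \emph{odd} descendants $U_n(a)$, so you must analyze the intermediate children too. Only for $n\ge6$ does the even family itself contain a face angle tending to $\pi$.
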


We prove the theorem by giving canonical representatives for the even members of the sequence. Congruence of a selected descendant to the next representative permits the construction to be iterated within the actual nested tetrahedra.

\section{A common two-step refinement stencil}
Let $B=(0,0,0)$, $A=(0,0,h)$ with $h>0$, and let $C,D$ be horizontal vectors. Put $t=(n-1)/n$. First subdivide $AD$ and retain the child adjacent to $D$:
\[
 M=\frac{A+(n-1)D}{n},\qquad U=\conv\{M,B,C,D\}.
\]
Next subdivide $BD$, put
\[
 P=\frac{2B+(n-2)D}{n},\qquad N=\frac{B+(n-1)D}{n},
\]
and retain $V=\conv\{M,N,P,C\}$. Since $P,N$ are consecutive division points, $V$ is a genuine child, including the case when $n=2$ and $P=B$.

Translate $V$ by $-N$ and label its vertices as
\[
 A'=M-N,\qquad B'=(0,0,0),\qquad C'=P-N,\qquad D'=C-N.
\]
Then
\begin{equation}\label{eq:map}
 A'=(0,0,h/n),\qquad B'=(0,0,0),\qquad C'=-D/n,\qquad D'=C-tD.
\end{equation}
The only remaining issue is to choose initial geometry for which $AD$ and then $BD$ are longest at every repetition of the two-step construction. Figure~\ref{fig:steps} shows the selection for trisection; the same choice of subsegments is used for every $n$.

\begin{figure}[tbp]
\centering
\begin{tikzpicture}[scale=.86,line join=round,font=\small]
\foreach \j in {0,1,2}{
\begin{scope}[xshift=5.05*\j cm]
\coordinate (B) at (0,0); \coordinate (C) at (2.526,0.427);
\coordinate (D) at (2.080,1.854); \coordinate (A) at (0,2.5);
\coordinate (Q) at (0.693,2.285); \coordinate (M) at (1.387,2.069);
\coordinate (N) at (1.387,1.236); \coordinate (P) at (0.693,0.618);
\draw[gray!45,dashed] (B)--(D);
\ifnum\j=0
\fill[fillblue] (A)--(C)--(D)--cycle;
\draw[edgeblue,thick] (A)--(B)--(C)--(A) (C)--(D);
\draw[cutred,very thick] (A)--(D);
\fill[cutred] (Q) circle(1.6pt) (M) circle(2pt);
\node[above right] at (M) {$M$};
\node[below] at (1.7,-.5) {(a) Select $AD$};
\node[left] at (A) {$A$};
\else
\draw[gray!45,dashed] (A)--(B) (A)--(C) (A)--(D);
\ifnum\j=1
\fill[fillblue] (M)--(B)--(C)--cycle;
\fill[edgeblue!13] (M)--(C)--(D)--cycle;
\draw[edgeblue,thick] (M)--(B)--(C)--(M)--(D)--(C);
\draw[cutred,very thick,dashed] (B)--(D);
\fill[cutred] (P) circle(2pt) (N) circle(2pt);
\node[below left,xshift=-1pt] at (P) {$P$};
\node[left,xshift=-1pt] at (N) {$N$};
\node[above left,xshift=-1pt] at (M) {$M$};
\node[below] at (1.7,-.5) {(b) Select $BD$};
\else
\draw[gray!45,dashed] (B)--(C)--(D) (M)--(B) (M)--(D);
\fill[fillblue] (M)--(P)--(C)--cycle;
\draw[edgeblue,thick] (M)--(P)--(C)--cycle;
\draw[edgeblue,thick,dashed] (M)--(N)--(C);
\draw[cutred,very thick,dashed] (P)--(N);
\node[above left,xshift=-1pt] at (M) {$M$};
\node[below left,xshift=-1pt] at (P) {$P$};
\node[left,xshift=-1pt] at (N) {$N$};
\node[below] at (1.7,-.5) {(c) Retain $MNPC$};
\fi\fi
\node[below left] at (B) {$B$}; \node[below right] at (C) {$C$};
\node[right] at (D) {$D$};
\end{scope}}
\end{tikzpicture}
\caption{The two-step construction for $n=3$, shown in an oblique projection of $E_3(1)$. Red indicates the selected edge or retained subsegment; blue indicates the retained tetrahedron. In panel (a), both trisection points of $AD$ are marked, but only $M$ is labeled because it determines the retained child. In (c), translating by $-N$ sends $M,N,P,C$ to $A',B',C',D'$, respectively. Dashed segments indicate edges hidden by the visible faces or reference edges from the preceding tetrahedron; color retains the meaning just described.}
\label{fig:steps}
\end{figure}

\section{A shape-preserving base for \texorpdfstring{$2\le n\le5$}{2 <= n <= 5}}
\label{sec:small}
Throughout this section, $n$ is an integer satisfying $2\le n\le5$. Set
\[
 \Delta_n=n-\frac{(n-1)^2}{4}.
\]
Then $\Delta_n>0$; in fact, among integers $n\ge2$, this inequality holds if and only if $2\le n\le5$.
For $0<a\le1$, define $E_n(a)=\conv\{A,B,C,D\}$ by
\begin{equation}\label{eq:smallcoords}
 A=(0,0,a),\quad B=(0,0,0),\quad C=(\sqrt a,0,0),\quad
 D=\left(\frac{n-1}{2}\sqrt a,\sqrt{\Delta_n a},0\right).
\end{equation}
The squared edge lengths, in the order $(AB,AC,AD,BC,BD,CD)$, are
\begin{equation}\label{eq:initialedges}
 (a^2,\ a+a^2,\ na+a^2,\ a,\ na,\ 2a).
\end{equation}
Thus $AD$ is uniquely longest. In the first child $U_n(a)$, the squared lengths in the order $(MB,MC,MD,BC,BD,CD)$ are
\begin{equation}\label{eq:intermediate}
 \left(\frac{(n-1)^2}{n}a+\frac{a^2}{n^2},\
 a+\frac{a^2}{n^2},\
 \frac an+\frac{a^2}{n^2},\ a,\ na,\ 2a\right).
\end{equation}
For example,
\[
 na-|MB|^2=a\left(2-\frac1n-\frac{a}{n^2}\right)>0,
 \qquad na-|MC|^2=a\left(n-1-\frac{a}{n^2}\right)>0.
\]
The other comparisons follow immediately. Hence $BD$ is uniquely longest for $n\ge3$; for $n=2$, $BD=CD$ and both are longest.

After the second selection and relabeling, the squared edge lengths are
\[
 \left(\frac{a^2}{n^2},\ \frac an+\frac{a^2}{n^2},\
 a+\frac{a^2}{n^2},\ \frac an,\ a,\ \frac{2a}{n}\right).
\]
Comparison with \eqref{eq:initialedges} proves the exact recurrence
\begin{equation}\label{eq:smallcycle}
 E_n(a)\longrightarrow U_n(a)\longrightarrow E_n(a/n),
 \qquad\text{up to congruence.}
\end{equation}
Each arrow denotes one longest-edge $n$-section with the prescribed child retained.

\subsection{Exact recovery of the earlier bisection counterexample}
For $n=2$, this is exactly the construction of \cite{K}, not merely a similar degeneration mechanism. Indeed, $\Delta_2=7/4$, and \eqref{eq:smallcoords} becomes
\[
 A=(0,0,a),\quad B=(0,0,0),\quad C=(\sqrt a,0,0),\quad
 D=(\sqrt a/2,\sqrt{7a}/2,0).
\]
Furthermore,
\[
 M=(A+D)/2,\qquad N=(B+D)/2,\qquad P=B.
\]
Thus the retained children are precisely $\conv\{M,B,C,D\}$ and $\conv\{M,N,B,C\}$, and \eqref{eq:smallcycle} reduces to the earlier recurrence
\[
 E_2(a)\longrightarrow U_2(a)\longrightarrow E_2(a/2).
\]
The first selected edge $AD$ is uniquely longest; at the second step, $BD=CD$, both are longest, and $BD$ is selected. The coordinates, child choices, parameter update, and repeated equality $BD=CD$ all coincide with the earlier example. For $n=3,4,5$, the same construction has a unique longest edge at both steps.

\paragraph{Comparison with Horst's bounded-quotient assertion.}
In the proof of Statement~4 of~\cite[p.~318]{Horst76}, Horst asserts that the longest-edge refinement rule keeps uniformly bounded the maximal quotient of the longest-edge length by a vertex-to-opposite-face distance. Let $a_{\min}(T)$ denote the smallest altitude of a tetrahedron $T$. In tetrahedral notation this quotient is
\[
 q(T)=\frac{h_T}{a_{\min}(T)}.
\]
For $E_2(a)$ the face $BCD$ lies in the plane $z=0$, while $A=(0,0,a)$, so the altitude from $A$ to $BCD$ equals $a$. From~\eqref{eq:initialedges},
\[
 h_{E_2(a)}^2=2a+a^2,
\]
and consequently
\[
 q(E_2(a))\ge \frac{h_{E_2(a)}}{a}
 =\sqrt{\frac{2+a}{a}}.
\]
Along the even descendants $a_k=a_0 2^{-k}$, and therefore
\[
 q(E_2(a_k))\longrightarrow\infty,
 \qquad
 h_{E_2(a_k)}=\sqrt{2a_k+a_k^2}\longrightarrow0.
\]
Thus the geometric equiboundedness assertion used in Horst's proof fails in general for tetrahedral longest-edge bisection. This is fully consistent with his singleton-convergence result: the elements shrink in diameter while their shapes degenerate. The calculation addresses the geometric boundedness step only; Horst's full Statement~4 also concerns affine underestimators associated with a concave objective function.

\subsection{Normalized volume}

The base area is $\sqrt{\Delta_n}a/2$ and the altitude is $a$. Therefore
\[
 |E_n(a)|=\frac{\sqrt{\Delta_n}}6a^2,\qquad
 h_{E_n(a)}^2=a(n+a).
\]
With $a_k=n^{-k}$, the even members consequently satisfy
\begin{equation}\label{eq:smallvol}
 \frac{|E_n(a_k)|}{h_{E_n(a_k)}^3}
 =\frac{\sqrt{\Delta_n}}6\frac{\sqrt{a_k}}{(n+a_k)^{3/2}}
 \longrightarrow0.
\end{equation}

The loss of relative height while the base shape stays fixed is illustrated in Figure~\ref{fig:flat} for $n=3$.

\begin{figure}[tbp]
\centering
\begin{tikzpicture}[scale=.9,line join=round,font=\small]
\foreach \k/\z/\j in {0/2.5/0,2/.833333/1,5/.160375/2}{
\begin{scope}[xshift=4.65*\j cm]
\coordinate (B) at (0,0); \coordinate (C) at (2.45,0);
\coordinate (D) at (3.43,1.55); \coordinate (A) at (0,\z);
\fill[fillblue] (B)--(C)--(D)--cycle;
\fill[edgeblue!13] (A)--(C)--(D)--cycle;
\draw[edgeblue,thick] (A)--(B)--(C)--(A)--(D)--(C);
\draw[edgeblue,dashed] (B)--(D);
\draw[cutred,very thick] (B)--(A);
\fill[cutred] (A) circle(1.5pt);
\node[left] at (A) {$A$};
\node[below left] at (B) {$B$}; \node[below] at (C) {$C$};
\node[right] at (D) {$D$};
\node at (1.7,-.65) {$k=\k$};
\end{scope}}
\end{tikzpicture}
\caption{Flattening for $n=3$. The even descendants $E_3(3^{-k})$ are rescaled by $3^{k/2}$ to keep their bases identical. Their normalized heights are $3^{-k/2}$: $1$, $1/3$, and $1/(9\sqrt3)$ in the three panels. The same oblique projection and scale are used throughout; the red segment is the altitude. These are normalized representatives, not the actual nested positions.}
\label{fig:flat}
\end{figure}

\subsection{Angle degeneration}
The distance from $B$ to the line $CD$ equals
\[
 \frac{2|BCD|}{|CD|}=\sqrt{\frac{\Delta_n a}{2}}.
\]
Since $A$ lies vertically above $B$, the interior dihedral angle at $CD$ is
\begin{equation}\label{eq:smallangle}
 \delta_n(a)=\arctan\sqrt{\frac{2a}{\Delta_n}}\longrightarrow0.
\end{equation}
For completeness, the maximum angle failure can also be established without an explicit cosine formula. Scale $U_n(a)$ by $a^{-1/2}$. Its vertices then satisfy
\[
 B=(0,0,0),\quad C=(1,0,0),\quad
 D=\left((n-1)/2,\sqrt{\Delta_n},0\right),\quad
 M=tD+(0,0,\sqrt a/n).
\]
Let $\Pi_a$ be orthogonal projection onto $(C-M)^\perp$. The interior dihedral angle at $MC$ is the angle between $\Pi_a(B-M)$ and $\Pi_a(D-M)$. As $a\downarrow0$, these vectors tend to
\[
 -t\Pi_0D\quad\hbox{and}\quad(1-t)\Pi_0D,
\]
respectively. Here $\Pi_0D\ne0$, since $B,C,D$ are not collinear. Their normalized scalar product therefore tends to $-1$. The dihedral angle tends to $\pi$, proving both angle failures for the full sequence.

\section{An anisotropic base for \texorpdfstring{$n\ge6$}{n >= 6}}
Define
\begin{equation}\label{eq:pq}
 p=\frac{n-1-\sqrt{n^2-6n+1}}{2n},\qquad
 q=\frac{n-1+\sqrt{n^2-6n+1}}{2n}.
\end{equation}
Then $0<p<q<1$, $p+q=t$, and $pq=1/n$. Also $q\ge1/2$ and $p=1/(nq)\le1/3$. Indeed,
$\sqrt{n^2-6n+1}\ge1$ for $n\ge6$.

For $k\ge0$ define $F_k=\conv\{A_k,B_k,C_k,D_k\}$ by
\begin{equation}\label{eq:largecoords}
 \begin{split}
 A_k&=(0,0,n^{-k}),\qquad B_k=(0,0,0),\\
 C_k&=(pq^k,qp^k,0),\qquad D_k=(q^k,p^k,0).
 \end{split}
\end{equation}
These tetrahedra are nondegenerate: their base determinants equal
$(p-q)(pq)^k\ne0$, and their altitudes are positive.

\subsection{Admissibility and recurrence}
Write $X=q^k$, $Y=p^k$, $h=n^{-k}$, and $L=X^2+Y^2$. Since $0<p,q<1$,
\[
 |BC|^2=p^2X^2+q^2Y^2<L,\qquad
 |CD|^2=(1-p)^2X^2+(1-q)^2Y^2<L.
\]
Moreover $|AB|^2=h^2$, $|AC|^2=|BC|^2+h^2$, and
$|AD|^2=L+h^2$. Thus $AD$ is uniquely longest.

In the first retained child, $|BD|^2=L$ and
\[
 \begin{aligned}
 |MB|^2&=t^2L+h^2/n^2,\\
 |MC|^2&=q^2X^2+p^2Y^2+h^2/n^2,\\
 |MD|^2&=(L+h^2)/n^2.
 \end{aligned}
\]
Since $h\le X$ and $p,q<t$, the first two expressions are at most
$(t^2+n^{-2})L<L$, whereas the third is at most $2L/n^2<L$.
Together with the unchanged inequalities for $BC$ and $CD$, this shows that $BD$ is uniquely longest.

Apply \eqref{eq:map} and then rotate by $\pi$ about the vertical axis. The new horizontal vectors are
\[
 C'=D_k/n=C_{k+1},\qquad D'=tD_k-C_k=D_{k+1},
\]
and the new altitude is $n^{-(k+1)}$. Hence the two selected descendants give
\begin{equation}\label{eq:largecycle}
 F_k\longrightarrow U_k\longrightarrow F_{k+1}
 \qquad\text{up to congruence.}
\end{equation}

\subsection{Volume and angles}
The determinant of the base and the altitude yield
\[
 |F_k|=\frac{q-p}{6}n^{-2k},\qquad h_{F_k}\ge q^k.
\]
Consequently,
\begin{equation}\label{eq:largevol}
 \frac{|F_k|}{h_{F_k}^3}
 \le\frac{q-p}{6}\left(\frac{p^2}{q}\right)^k\longrightarrow0,
 \qquad \frac{p^2}{q}\le\frac29.
\end{equation}
To examine the face $B_kC_kD_k$, divide its coordinates by $q^k$ and write $\varepsilon_k=(p/q)^k\to0$. The scaled vertices are
\[
 (0,0),\qquad(p,q\varepsilon_k),\qquad(1,\varepsilon_k).
\]
The two vectors from the middle vertex tend to $(-p,0)$ and $(1-p,0)$, so its angle tends to $\pi$. The angles at the other two vertices tend to zero, either directly from their edge directions or from the triangle angle sum. Thus both tetrahedral angle conditions fail already on these faces. After rescaling by $q^{-k}$, all four vertices approach a line; thus the sequence $\{F_k\}$ exhibits skinny-tetrahedron degeneration. This completes the proof of Theorem~\ref{thm:main}.

Figure~\ref{fig:base} illustrates the base degeneration for $n=6$.
\begin{figure}[tbp]
\centering
\begin{tikzpicture}[line join=round,font=\small]
\foreach \k/\eps/\j in {0/1/0,3/.296296/1,8/.0390184/2}{
\begin{scope}[xshift=4.7*\j cm]
\coordinate (B) at (0,0); \coordinate (C) at (1.2,1.8*\eps);
\coordinate (D) at (3.6,3.6*\eps);
\draw[gray!35,thin] (0,-.12)--(3.9,-.12);
\fill[fillblue] (B)--(C)--(D)--cycle;
\draw[edgeblue,thick] (B)--(C)--(D)--cycle;
\fill[cutred] (C) circle(2pt);
\node[below left] at (B) {$B$};
\node[above left] at (C) {$C$}; \node[above right] at (D) {$D$};
\node at (1.8,-.6) {$k=\k$};
\end{scope}}
\end{tikzpicture}
\caption{Base degeneration for $n=6$, where $p=1/3$ and $q=1/2$. After scaling by $q^{-k}$, the planar vertices are $B=(0,0)$, $C=(1/3,\varepsilon_k/2)$, $D=(1,\varepsilon_k)$, with $\varepsilon_k=(2/3)^k$. Both coordinate axes have the same scale in every panel. The angle at the red vertex $C$ tends to $\pi$ as the base becomes collinear.}
\label{fig:base}
\end{figure}

\section{Why does the construction change between five and six subdivisions?}
The threshold concerns the subdivision number $n$, not the ambient dimension, which is three in every example. The matrix below describes the motion of the two horizontal base vectors over \emph{two} successive $n$-sections; it does not describe a single subdivision or the vertical coordinate.

\subsection{Derivation of the matrix from the vertex update}
Recall that initially $B=(0,0,0)$, $A=(0,0,h)$ and $C,D$ are horizontal. After the two child selections, translate by $-N=-tD$ and label the translated vertices $A'=M-N$, $B'=(0,0,0)$, $C'=P-N$, and $D'=C-N$. In particular,
\[
 C'=P-N=\frac{n-2}{n}D-\frac{n-1}{n}D=-\frac1nD,
 \qquad D'=C-N=C-tD.
\]
Also $A'=M-N=(0,0,h/n)$, so the altitude contracts separately by $1/n$.

Choose either fixed horizontal coordinate, say $x$, and denote the coordinates of $C,D$ by $c,d$. Their new coordinates satisfy
\[
 c'=-\frac1n d,\qquad d'=c-\frac{n-1}{n}d.
\]
Collecting them into a column gives
\begin{equation}\label{eq:matrixcycle}
 \begin{pmatrix}c'\\d'\end{pmatrix}
 =M_n\begin{pmatrix}c\\d\end{pmatrix},\qquad
 M_n=\begin{pmatrix}0&-1/n\\1&-(n-1)/n\end{pmatrix}.
\end{equation}
The first row encodes $C'=-D/n$, and the second encodes $D'=C-tD$. The same matrix acts on the pair of $y$-coordinates. Thus after $k$ two-step cycles each horizontal coordinate pair is obtained by multiplication by $M_n^k$. These updates include the translations and relabelings; the resulting representatives are congruent to the actual nested descendants.

\subsection{Eigenvalues and the integer threshold}
Taking the determinant gives
\[
 \det(\lambda I-M_n)=
 \det\begin{pmatrix}\lambda&1/n\\-1&\lambda+t\end{pmatrix}
 =\lambda^2+t\lambda+\frac1n.
\]
The two eigenvalues and the discriminant are therefore
\[
 \lambda_\pm=\frac{-(n-1)\pm\sqrt{n^2-6n+1}}{2n},\qquad
 t^2-\frac4n=\frac{n^2-6n+1}{n^2}.
\]
Since $n^2-6n+1=(n-3)^2-8$, its larger zero is $3+2\sqrt2\approx5.828$. For integer $n\ge2$, the discriminant is negative exactly for $2\le n\le5$ and positive exactly for $n\ge6$. There is no integer repeated-eigenvalue case.

\subsection{Complex eigenvalues and a base of fixed shape}
For $2\le n\le5$, the eigenvalues are a conjugate pair. Their product is $1/n$, so each has modulus $n^{-1/2}$. A real matrix with such a pair is similar to a rotation followed by contraction by $n^{-1/2}$. This alone does not make it a Euclidean similarity in arbitrary coordinates; the particular base geometry in~\eqref{eq:smallcoords} realizes the required metric explicitly.

Indeed, for that base,
\[
 |C|^2=a,\qquad |D|^2=na,\qquad C\cdot D=\frac{n-1}{2}a.
\]
Its Gram determinant is $a^2\Delta_n>0$. Applying the vertex update gives
\[
 |C'|^2=\frac an,\qquad |D'|^2=a,\qquad
 C'\cdot D'=\frac{n-1}{2n}a.
\]
Every Gram entry is exactly $1/n$ times its previous value. Hence the base triangle is carried to a congruent copy scaled by $n^{-1/2}$. Meanwhile, the altitude is scaled by $n^{-1}$. Its ratio to a base length is therefore multiplied by $n^{-1/2}$ at each cycle, explaining the flattening.

This construction requires $\Delta_n=n-(n-1)^2/4>0$: otherwise the proposed Gram matrix is not positive definite and cannot represent a nondegenerate Euclidean base. This is precisely the same threshold as the change of eigenvalue type.

\subsection{Real eigenvalues and anisotropic base contraction}
For $n\ge6$, the eigenvalues are $-p$ and $-q$, with $p,q$ defined in~\eqref{eq:pq}. The coordinate columns used in~\eqref{eq:largecoords} are
\[
 \begin{pmatrix}x_{C_k}\\x_{D_k}\end{pmatrix}
 =q^k\begin{pmatrix}p\\1\end{pmatrix},\qquad
 \begin{pmatrix}y_{C_k}\\y_{D_k}\end{pmatrix}
 =p^k\begin{pmatrix}q\\1\end{pmatrix}.
\]
Because $pq=1/n$ and $p+q=t$,
\[
 M_n\begin{pmatrix}p\\1\end{pmatrix}
 =-q\begin{pmatrix}p\\1\end{pmatrix},\qquad
 M_n\begin{pmatrix}q\\1\end{pmatrix}
 =-p\begin{pmatrix}q\\1\end{pmatrix}.
\]
Thus the two horizontal directions contract at different rates, $q$ and $p$. The rotation by $\pi$ used after each update reverses both horizontal signs and removes the minus signs in these formulas. After division by $q^k$, the remaining transverse scale is $(p/q)^k\to0$, which explains why the base itself approaches a line.

The even descendants also illustrate Edelsbrunner's qualitative distinction between \emph{flat} and \emph{skinny} tetrahedra~\cite[Section~14]{Ed00}. For $2\le n\le5$, rescaling the representatives $E_n(a)$ by $a^{-1/2}$ leaves a fixed nondegenerate base triangle while the fourth vertex approaches a base vertex. These tetrahedra are flat, with two vertices coalescing in the limit. For $n\ge6$, rescaling the representatives in~\eqref{eq:largecoords} by $q^{-k}$ gives a limiting segment, with two vertices at one endpoint, one in its interior, and one at the other endpoint. These tetrahedra are skinny, with vertex clustering of type $2$--$1$--$1$ (equivalently $1$--$1$--$2$ after reversal). These qualitative descriptions supplement the explicit angle limits; neither family is described here as a family of slivers.

This skinny, line-like degeneration is qualitatively reminiscent of the Zhang tetrahedra produced by particular diagonal choices in tetrahedral red refinement: those tetrahedra likewise become needle-like, with a face angle and a dihedral angle tending to $\pi$; see Zhang~\cite{Zhang95} and the detailed analysis in~\cite[Theorem~3.1 and Remark~3.3]{KKK22}. The resemblance is not an identity of mechanisms or normalized limits. Indeed, after normalization by the diameter, the coordinates in~\cite[Example~3.2]{KKK22} give a $3$--$1$ endpoint clustering for the Zhang sequence, whereas the present $n\ge6$ sequence has the $2$--$1$--$1$ clustering described above.

The matrix analysis explains the two geometries; it does not by itself establish that the prescribed edges remain longest. Those inequalities were verified separately in the preceding sections. Both regimes yield degeneration, so the threshold separates mechanisms, not regular and irregular refinement.

\begin{remark}
The result concerns an infinite descendant branch and, consequently, any full refinement tree containing that branch. It does not say that every branch degenerates. For $n\ge3$ no choice among equal longest edges is involved along the prescribed sequence. Section~\ref{sec:conforming} supplies the compatibility argument for the conforming LE $n$-section algorithm.
\end{remark}

\section{Shrinking diameters and temporary edge preservation}\label{sec:diameter}
The degeneration above is a loss of shape, not a failure of element diameters to tend to zero. We first prove a uniform bound for every longest-edge $n$-section tree, with arbitrary choices when several edges are longest.

\begin{proposition}[Uniform diameter decay]\label{prop:diameter}
Fix $n\ge2$ and put
\[
 \gamma_n=\sqrt{1-\frac{n-1}{n^2}}<1.
\]
Every depth-$k$ descendant $T$ of $T_0$ satisfies
\begin{equation}\label{eq:diameterdecay}
 h_T\le\gamma_n^{\lfloor k/6\rfloor}h_{T_0}.
\end{equation}
Consequently, the classical LE $n$-section algorithm (Definition~\ref{def:G}) satisfies $h(\mathcal G_k)\to0$, whether or not its partitions are conforming.
\end{proposition}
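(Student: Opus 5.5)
The plan is to adapt Kearfott's edge-counting argument to $n$-section. The bound~\eqref{eq:diameterdecay} follows by induction over blocks of six refinement steps, once we establish one claim. \emph{Claim:} if $T_6$ is any depth-$6$ descendant of a tetrahedron $T_0$, then $h_{T_6}\le\gamma_nh_{T_0}$. Applying the claim successively to the depth-$6m$ ancestors gives~\eqref{eq:diameterdecay}. This uses that $h_T$ is nonincreasing along a branch, because every child is contained in its parent, so the steps beyond $6\lfloor k/6\rfloor$ do no harm.

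The key local estimate controls the new edges. Let $T=\conv\{A,B,C,D\}$ with $|AB|=h_T$ selected, and put $s=j/n$ with $1\le j\le n-1$. Stewart's identity gives
\[
 |P_jC|^2=(1-s)|AC|^2+s|BC|^2-s(1-s)|AB|^2\le h_T^2\bigl(1-s(1-s)\bigr).
\]
Since $s(1-s)\ge (1/n)(1-1/n)=(n-1)/n^2$ for such $j$, we get $|P_jC|\le\gamma_nh_T$. The same bound holds for $|P_jD|$. The subsegments $P_{j-1}P_j$ have length $h_T/n\le\gamma_nh_T$, because $1/n^2\le 1-(n-1)/n^2$ is equivalent to $n^2-n+1\ge1$. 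So every edge of a child is either inherited unchanged from the parent or has length at most $\gamma_nh_T$.

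Next comes the counting argument along a branch $T_0\supset T_1\supset\cdots\supset T_6$. By induction on the depth, every edge of $T_i$ has one of three forms: a full edge of $T_0$, a segment contained in an edge of some $T_l$ with $l<i$, or a segment $P_jC$ or $P_jD$ created at some step. In the second case the segment is either a subsegment of length at most $h_{T_l}/n$ of a selected edge, or a retained full edge, which reduces to an earlier case. Hence any edge of $T_i$ longer than $\gamma_nh_{T_0}$ must be a full original edge of $T_0$ that has never been selected. Now suppose $h_{T_i}>\gamma_nh_{T_0}$ for every $i=0,\dots,5$. Then at each of these six steps the selected longest edge is such an unselected original edge. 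Once selected, it is replaced by subsegments and never reappears. Since $T_0$ has only six edges, after six steps no original edge remains, and therefore $h_{T_6}\le\gamma_nh_{T_0}$. If instead $h_{T_i}\le\gamma_nh_{T_0}$ for some $i\le5$, monotonicity gives the claim directly.

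For the classical LE $n$-section algorithm, every element of $\mathcal G_k$ is a depth-$k$ descendant of one of the finitely many elements of $\mathcal G_0$. Hence $h(\mathcal G_k)\le\gamma_n^{\lfloor k/6\rfloor}h(\mathcal G_0)\to0$. Conformity plays no role, and ties among longest edges are harmless, since the argument uses only $|e_T|=h_T$. I expect the only delicate point to be the bookkeeping showing that long edges must be original unselected edges; the metric estimate itself is just Stewart's identity.
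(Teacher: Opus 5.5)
Your proof is correct and follows the same route as the paper. The paper also bounds every new edge by $\gamma_n h$, using the Stewart-type identity $|P-C|^2\le[1-s(1-s)]h^2\le\gamma_n^2h^2$ together with $h/n\le\gamma_n h$ for the subsegments. It then observes that any edge longer than $\gamma_n h_{T_0}$ must be one of the at most six unselected original edges, each of which the longest-edge rule removes, and iterates over blocks of six steps. Your version differs only in making the bookkeeping explicit, namely the minimization $s(1-s)\ge(n-1)/n^2$ and the induction showing that long edges are unselected original edges.
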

\begin{proof}
Let $AB$ be a longest edge of a parent $S$ and let $P=(1-t)A+tB$, where $t=j/n$ and $1\le j\le n-1$. For an opposite vertex $C$,
\[
 |P-C|^2=(1-t)|A-C|^2+t|B-C|^2-t(1-t)|A-B|^2
 \le[1-t(1-t)]h_S^2\le\gamma_n^2h_S^2.
\]
The edges along $AB$ in the children have length $h_S/n\le\gamma_n h_S$. Thus every new edge has length at most $\gamma_n h_S$; all remaining child edges are inherited.

Fix a starting tetrahedron $T_*$ and follow any branch. Descendant diameters do not increase, so no new edge longer than $\gamma_n h_{T_*}$ can appear. While such an edge remains, the longest-edge rule removes one of them. There are at most six, all inherited from the starting tetrahedron. Hence after six subdivisions every edge is at most $\gamma_n h_{T_*}$. Iterating this argument proves~\eqref{eq:diameterdecay}. Taking the maximum over the finite initial mesh proves the final assertion.
\end{proof}

The estimate is not intended to be sharp. For bisection, sharper classical bounds are available, including the bound discussed in~\cite{Stynes83}. The same argument in dimension $d$ replaces six by $d(d+1)/2$.

In the explicit two-step construction, the first child retains the entire face $BCD$. Its three edges are unchanged during that step, but this is only temporary edge preservation. For $2\le n\le5$,
\[
 h_{E_n(a)}=\sqrt{na+a^2},\qquad h_{U_n(a)}=\sqrt{na},\qquad
 \frac{h_{U_n(a)}}{h_{E_n(a)}}=(1+a/n)^{-1/2}\longrightarrow1.
\]
Thus there is asymptotically negligible relative diameter reduction at the first step of each cycle, although it is strictly positive. At even levels $a_k=a_0n^{-k}$, base lengths scale as $n^{-k/2}$ and the altitude as $n^{-k}$. For $n\ge6$, the corresponding ratio is
\[
 \frac{h_{U_k}}{h_{F_k}}=
 \left(1+\frac{n^{-2k}}{q^{2k}+p^{2k}}\right)^{-1/2}\longrightarrow1.
\]
In both regimes every edge tends to zero. No edge of positive length persists through infinitely many refinement steps. This differs from the persistent-edge mechanism exhibited for largest-dihedral-angle bisection in~\cite{KM26}.

Global refinement and following one branch are different operations: refining only the tracked tetrahedron while leaving its siblings untouched need not make the maximal diameter of the whole partition tend to zero. Independent global refinement, on the other hand, need not preserve conformity. The next sections separate quantitative size control from these compatibility questions.

\section{A scale-dependent bound on degeneration}\label{sec:weak}
\begin{proposition}\label{prop:weak}
Let $T$ be any depth-$k$ descendant of $T_0$ under longest-edge $n$-section. Set
\[
 \beta_n=\frac{6\log n}{-\log\gamma_n}>3.
\]
Then
\begin{align}
 |T|&\ge\frac{|T_0|}{n^6}
             \left(\frac{h_T}{h_{T_0}}\right)^{\beta_n},\label{eq:weakvolume}\\
 \frac{r_T}{h_T}&\ge\frac{3|T_0|}{2n^6h_{T_0}^3}
             \left(\frac{h_T}{h_{T_0}}\right)^{\beta_n-3}.\label{eq:weakinradius}
\end{align}
\end{proposition}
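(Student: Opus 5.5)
The argument combines two facts. First, volume decreases exactly by a factor $n$ per step. Second, Proposition~\ref{prop:diameter} limits how much the diameter can fall after $k$ steps. In each local $n$-section the children $\conv\{P_{j-1},P_j,C,D\}$ have equal volume, since they share the face-opposite apex structure and their bases along $AB$ have length $|AB|/n$. Hence every depth-$k$ descendant satisfies
\[
 |T|=n^{-k}|T_0|.
\]
The task is therefore to bound $n^{-k}$ from below by a power of $h_T/h_{T_0}$.

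\textbf{Step 1: converting diameter decay into a bound on $k$.} Proposition~\ref{prop:diameter} gives $h_T\le\gamma_n^{\lfloor k/6\rfloor}h_{T_0}$. Since $\lfloor k/6\rfloor\ge k/6-1$, this yields
\[
 \frac{h_T}{h_{T_0}}\le\gamma_n^{k/6-1}.
\]
Taking logarithms, with $-\log\gamma_n>0$, we obtain
\[
 \frac k6-1\le\frac{\log(h_{T_0}/h_T)}{-\log\gamma_n}.
\]
Multiplying by $6\log n$ then gives
\[
 k\log n\le 6\log n+\beta_n\log(h_{T_0}/h_T),
\]
that is,
\[
 n^{-k}\ge n^{-6}\left(\frac{h_T}{h_{T_0}}\right)^{\beta_n}.
\]
Combined with $|T|=n^{-k}|T_0|$, this is exactly~\eqref{eq:weakvolume}. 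The offset in $\lfloor\cdot\rfloor$ accounts for the factor $n^{-6}$. The claim $\beta_n>3$ follows from $\gamma_n^2=1-(n-1)/n^2$. We need $-\log\gamma_n<2\log n$, i.e.\ $\gamma_n>n^{-2}$, which holds because $\gamma_n^2\ge 3/4>n^{-4}$.

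\textbf{Step 2: the inradius bound.} I would use the identity $r_T=3|T|/|\partial T|$. The surface area satisfies $|\partial T|\le 4\cdot\frac{\sqrt3}{4}h_T^2\le 2h_T^2$, because each face is a triangle with diameter at most $h_T$ and so has area at most $\frac{\sqrt3}{4}h_T^2$. This gives
\[
 r_T\ge\frac{3|T|}{2h_T^2}.
\]
Substituting~\eqref{eq:weakvolume} and dividing by $h_T$, then writing $h_T^{-3}=h_{T_0}^{-3}(h_T/h_{T_0})^{-3}$, produces~\eqref{eq:weakinradius} with exactly the stated constant $3|T_0|/(2n^6h_{T_0}^3)$.

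\textbf{Main obstacle.} The only delicate point is the exact equal-volume subdivision together with the careful handling of the floor in Step 1. The face-area estimate must also be stated carefully: a triangle with diameter $h$ has area at most $\frac{\sqrt3}{4}h^2$, the value attained by the equilateral triangle. If one prefers to avoid this isodiametric fact, the cruder bound $|F|\le\frac12 h_T^2$ gives $|\partial T|\le 2h_T^2$ directly, and that is all the stated constant requires.
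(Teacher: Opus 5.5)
Your proof is correct and follows the paper's argument: exact volume scaling $|T|=n^{-k}|T_0|$, the diameter decay with $\lfloor k/6\rfloor\ge k/6-1$ (which the paper writes as $\gamma_n^{\beta_n(k/6-1)}=n^{6-k}$ rather than in logarithmic form), and $r_T=3|T|/S_T$ with $S_T\le 2h_T^2$. Your explicit verification of $\beta_n>3$ via $\gamma_n^2\ge 3/4>n^{-4}$ fills in a claim the paper states without proof.
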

\begin{proof}
Every child has $1/n$ of the parent's volume, so $|T|=n^{-k}|T_0|$. By~\eqref{eq:diameterdecay} and $\lfloor k/6\rfloor\ge k/6-1$,
\[
 \left(\frac{h_T}{h_{T_0}}\right)^{\beta_n}
 \le\gamma_n^{\beta_n(k/6-1)}=n^{6-k},
\]
which proves~\eqref{eq:weakvolume}. Each of the four faces has area at most $h_T^2/2$. Therefore the surface area $S_T$ satisfies $S_T\le2h_T^2$, and $r_T=3|T|/S_T$ proves~\eqref{eq:weakinradius}.
\end{proof}

For any fixed finite initial mesh, minimizing the positive constants over its elements gives a bound $r_T/h_T\ge c h_T^{\beta_n-3}$ for all descendants. This controls deterioration by a power of the element size, but permits $r_T/h_T\to0$. It is a scale-dependent geometric estimate, not a uniform angle condition or a claimed standard weakening of the maximum angle condition. No finite element convergence theorem is inferred from it here.

\section{Conforming edge-patch partitions}\label{sec:conforming}
For an edge $e$ of a conforming partition, its incident patch is
\[
 \omega(e)=\{T\in\mathcal T:e\text{ is an edge of }T\}.
\]
For $n=2$, this is the face-to-face longest-edge bisection algorithm studied in~\cite{HKK}. An edge-patch $n$-section inserts the same $n-1$ equally spaced points on $e$ and replaces every incident tetrahedron by its $n$ children. A shared face containing $e$ receives the same subdivision from both sides; a face not containing $e$ is unchanged. The resulting partition is therefore conforming. If $e$ is globally longest, it is a longest edge of every incident element, so all these subdivisions obey the geometric longest-edge rule.

\begin{proposition}[Vanishing mesh size for every global edge-patch sequence]\label{prop:patchdiameter}
Fix $n\ge2$ and apply the conforming LE $n$-section algorithm (Definition~\ref{def:P}) to a finite conforming initial partition. Writing its mesh sequence as $\{\mathcal T_m\}$, we have
\[
 h(\mathcal T_m)\longrightarrow0\qquad(m\to\infty),
\]
independently of which globally longest edge is selected.
\end{proposition}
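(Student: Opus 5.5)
Since the mesh is nested, $h(\mathcal T_m)$ is nonincreasing, and we may argue by contradiction. Suppose $h(\mathcal T_m)\downarrow h_*>0$. Put $\gamma=\gamma_n$ from Proposition~\ref{prop:diameter}, and choose a threshold $\tau$ with $\gamma h_* <\tau< h_*$. Concretely, take $m_0$ so large that $h(\mathcal T_{m_0})<h_*/\gamma$, and set $\tau=\gamma h(\mathcal T_{m_0})<h_*$. We call an edge \emph{long} if its length exceeds $\tau$.

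The key local fact comes from the proof of Proposition~\ref{prop:diameter}. When a patch $\omega(e_m)$ is refined, every newly created edge has length at most $\gamma|e_m|\le\gamma h(\mathcal T_m)\le\tau$ for $m\ge m_0$. This covers the segments of $e_m$ and the segments joining division points to opposite vertices. The computation there used only that $e_m$ is a longest edge of each incident tetrahedron, which holds because $e_m$ is globally longest. All other edges of $\mathcal T_{m+1}$ are edges of $\mathcal T_m$ that are not removed. Hence for $m\ge m_0$ no long edge is ever created, while the selected edge $e_m$, with $|e_m|=h(\mathcal T_m)\ge h_*>\tau$, is long and disappears from the edge set. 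Indeed, after the patch operation it is replaced by its $n$ subsegments, and because the partition is conforming it is not an edge of any remaining element. So the number of long edges strictly decreases at every step $m\ge m_0$.

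Since $\mathcal T_{m_0}$ is finite, its set of long edges is finite. After finitely many steps there are therefore no long edges at all, which gives $h(\mathcal T_m)\le\tau<h_*$. This contradicts $h(\mathcal T_m)\ge h_*$. Hence $h_*=0$, and the argument never used which globally longest edge was selected.

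The main obstacle is the bookkeeping claim that the edge set evolves exactly as described: edges outside the patch are untouched, and $e_m$ itself is genuinely removed rather than surviving in some element outside $\omega_m(e_m)$. Both points rely on conformity of $\mathcal P_m$. Every element containing $e_m$ as a subset contains it as a full edge, and so belongs to the patch. I would justify this using the conformity argument stated just before the proposition, or by induction on $m$ together with that argument.
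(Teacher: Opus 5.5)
Your proof is correct, but it takes a different route from the paper. The paper fixes $\varepsilon>0$ and uses Proposition~\ref{prop:diameter} as a black box along every ancestor chain. This gives a depth $D$ beyond which all elements have diameter below $\varepsilon$. The paper then notes that the refinement forest has only finitely many nodes of depth less than $D$. While $h(\mathcal T_m)\ge\varepsilon$, each patch operation consumes at least one such node, so only finitely many such operations can occur.

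You instead use only the one-step estimate from inside that proof: every new edge has length at most $\gamma_n|e_m|$. Combined with a strictly decreasing count of edges longer than $\tau$, this means you never need depths or the tree structure. Your route is more elementary and also quantitative. Dropping the contradiction framing, it shows that from any stage $m$, at most $N_m$ further operations bring the mesh size down to $\gamma_n h(\mathcal T_m)$. Here $N_m$ is the number of edges of $\mathcal T_m$ longer than $\gamma_n h(\mathcal T_m)$. The paper's route, in turn, reuses Proposition~\ref{prop:diameter} directly and has the same structure as the waiting-phase argument in the proof of Theorem~\ref{thm:conforming}.

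The obstacle you flag is not really one. The edge $e_m$ leaves the edge set because $\omega_m(e_m)$ is by definition the set of all elements having $e_m$ as an edge. Moreover, for $n\ge2$ no child $\conv\{P_{j-1},P_j,C,D\}$ has both endpoints of $e_m$ among its vertices. Conformity is not needed at this step, nor anywhere else in your argument.
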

\begin{proof}
Every incident tetrahedron has the selected edge as one of its longest edges, so Proposition~\ref{prop:diameter} applies to each element's ancestor sequence. Fix $\varepsilon>0$ and choose an integer $D$ such that
\[
 \gamma_n^{\lfloor D/6\rfloor}h(\mathcal T_0)<\varepsilon.
\]
Every generated element of depth at least $D$ has diameter less than $\varepsilon$. The refinement forest of the actual sequence has finitely many roots and exactly $n$ children for each refined node. Consequently, it has only finitely many nodes of depth less than $D$.

If $h(\mathcal T_m)\ge\varepsilon$, the selected edge has length $h(\mathcal T_m)$. Every tetrahedron in its nonempty incident patch therefore has diameter at least $\varepsilon$ and depth less than $D$. The operation removes at least one such node, and a removed node never reappears. Only finitely many operations with $h(\mathcal T_m)\ge\varepsilon$ are possible. Finally, nested refinement makes $h(\mathcal T_m)$ nonincreasing. It follows that eventually $h(\mathcal T_m)<\varepsilon$, proving the assertion.
\end{proof}

Thus \emph{both} algorithms in this paper generate families with vanishing discretization parameter. The classical LE $n$-section algorithm need not preserve conformity; the conforming LE $n$-section algorithm preserves conformity. Neither conclusion is a shape-regularity assertion. Moreover, the index $k$ in Proposition~\ref{prop:diameter} counts complete refinement levels, while $m$ here counts patch operations: no estimate with $\lfloor m/6\rfloor$ is claimed for the conforming LE $n$-section algorithm. Arbitrary local refinement, without a global longest-edge rule or another condition enforcing refinement throughout the domain, need not have vanishing maximal diameter.

\begin{theorem}[Conforming realization of the counterexample]\label{thm:conforming}
For every fixed $n\ge2$, there exist a tetrahedral domain $\Omega$ and a nested sequence of conforming partitions $\{\mathcal T_m\}_{m\ge0}$, generated by the conforming LE $n$-section algorithm, such that
\[
 h(\mathcal T_m)\longrightarrow0,
\]
while the family fails shape regularity and both angle conditions. It contains, at a subsequence of mesh stages, every member of the sequence from Theorem~\ref{thm:main}.
\end{theorem}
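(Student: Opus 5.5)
Take $\Omega=T_0$, the initial tetrahedron of Theorem~\ref{thm:main} ($E_n(a_0)$ for $2\le n\le5$, $F_0$ for $n\ge6$), and $\mathcal T_0=\{T_0\}$. This partition is trivially conforming. By the discussion preceding Proposition~\ref{prop:patchdiameter}, every edge-patch operation preserves conformity. Proposition~\ref{prop:patchdiameter} gives $h(\mathcal T_m)\to0$ for every admissible schedule. It therefore remains to schedule the globally longest edges so that every tracked descendant $T_0\to U\to T_1\to U_1\to\cdots$ appears in the mesh and is refined along the prescribed edge. Degeneration and angle failure then follow, because the family contains the sequence of Theorem~\ref{thm:main}.

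The invariant I maintain by induction is the following. At some stage $m_j$, the tracked tetrahedron $S_j$ (the $j$-th member of the branch) is an element of $\mathcal T_{m_j}$, and its prescribed edge $e_j$ is a longest edge of $S_j$. From this state I run the algorithm and choose, at each step, a globally longest edge according to two rules:
\begin{itemize}
 \item whenever $e_j$ is globally longest, I select it;
 \item otherwise I select any globally longest edge.
\end{itemize}

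The key claim is that $S_j$ survives until $e_j$ is selected. Suppose a selected edge $e\neq e_j$ has $|e|\ge|e_j|=h_{S_j}$. Then $e$ cannot be an edge of $S_j$: such an edge would have length at most $h_{S_j}$, and equality would mean $e$ is also a longest edge of $S_j$. For $n\ge3$ the longest edge is unique, so this cannot happen. For $n=2$ I break the tie in favor of $e_j$ (the $BD=CD$ case), as Definition~\ref{def:P} allows. Hence the patch of $e$ does not contain $S_j$, and $S_j$ remains in the mesh.

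Only finitely many such operations occur, by the counting argument in the proof of Proposition~\ref{prop:patchdiameter}. Every element refined along an edge of length $\ge h_{S_j}>0$ has bounded depth, and removed nodes never reappear. So eventually $h(\mathcal T_m)=|e_j|$ and $e_j$ is globally longest. At that point I select it. By conformity its patch contains $S_j$, and the same subdivision points are used in all incident tetrahedra. The resulting children of $S_j$ are exactly $\mathcal N_n(S_j,e_j)$, so the prescribed child $S_{j+1}$ lies in $\mathcal T_{m_j+1}$. By Theorem~\ref{thm:main} and the congruences \eqref{eq:smallcycle} and \eqref{eq:largecycle}, its prescribed edge is again (uniquely, for $n\ge3$) longest, which closes the induction.

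I expect the main obstacle to be the $n=2$ tie issue in its global form. Several edges of equal length may be globally longest, including $CD$ in the intermediate tetrahedron $U_2$. If $CD$ were selected first, $U_2$ would be split along the wrong edge. The first rule handles this: whenever $|e_j|=h(\mathcal T_m)$, select $e_j$ before any other edge of equal length. I also need to rule out that $S_j$ is removed earlier through an edge of equal length that is not an edge of $S_j$. Such an edge cannot bisect $S_j$ unless it is an edge of $S_j$, because patches consist only of elements containing the selected edge.

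Finally, I would verify that the full sequence $\{\mathcal T_m\}$, which consists of conforming partitions with $h\to0$ and contains every $S_j$, violates \eqref{eq:shapevolume}, \eqref{eq:minangle} and \eqref{eq:maxangle}, using \eqref{eq:smallvol}, \eqref{eq:largevol} and the angle limits established earlier.
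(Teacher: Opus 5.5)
Your proposal is correct and follows essentially the same route as the paper. The paper's proof also has three stages: a finite waiting phase that refines only globally longest edges strictly longer than $h_{S_j}$, which therefore cannot touch $S_j$; finiteness of that phase via the bounded-depth node count from Proposition~\ref{prop:patchdiameter}; and selection of $e_j$ as soon as it becomes globally longest, with $h(\mathcal T_m)\to0$ obtained either from $L_k\to0$ or, as you do, directly from Proposition~\ref{prop:patchdiameter}. Two minor points: under your first rule, any other selected edge is strictly longer than $|e_j|$, so the uniqueness and tie-breaking discussion in your key claim is redundant; and $S_{j+1}$ appears just after the possibly later stage at which $e_j$ is selected, not necessarily in $\mathcal T_{m_j+1}$.
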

\begin{proof}
The scheduling argument extends the planar bad-descendant construction of \cite[Theorem~3]{KPS15} to tetrahedral edge patches. Write the prescribed sequence as $T_0\supset T_1\supset\cdots$, including both even and odd descendants. Let $e_k$ be its prescribed longest edge in $T_k$ and $L_k=|e_k|=h_{T_k}$. Start with $\Omega=\operatorname{int}T_0$ and $\mathcal T_0=\{T_0\}$.

Suppose the current mesh contains $T_k$. While any mesh edge is longer than $L_k$, choose a globally longest edge and refine its full incident patch. These operations cannot affect $T_k$, because none of its edges exceeds $L_k$.

This waiting phase is finite. By Proposition~\ref{prop:diameter}, every descendant at sufficiently large depth has diameter less than $L_k$. Only finitely many tetrahedra can occur at smaller depths in the finitely branching tree of the refinements actually performed. Each waiting operation removes at least one previously unrefined element with diameter greater than $L_k$, so infinitely many waiting operations are impossible.

Now $e_k$ is globally longest. Select $e_k$ among the globally longest edges and refine its patch. The prescribed child $T_{k+1}$ is present in the resulting conforming mesh. Repeating the procedure realizes the entire sequence. At the stage just before this prescribed operation, $T_k$ is present and the mesh maximum is exactly $L_k$. Since $L_k\to0$ by Proposition~\ref{prop:diameter}, and all intervening refinements are nested and cannot increase diameters, the whole mesh sequence has discretization parameter tending to zero.

Alternatively, vanishing mesh size follows directly from Proposition~\ref{prop:patchdiameter}, since every operation in this construction selects a globally longest edge. The family contains all even and odd counterexample elements. Equations~\eqref{eq:smallvol} and~\eqref{eq:largevol} and the angle limits already proved therefore establish the three failures.
\end{proof}

For $n=2$, the prescribed choice between the two equal longest edges is essential to this particular branch. For $n\ge3$, the selected edge is uniquely longest within the tracked element, although equally long edges may occur elsewhere in the mesh. The theorem is an existence result for the stated patch rule and scheduling; it does not assert degeneration for every rule for selecting among equal longest edges or for algorithms with extra marking restrictions. For bisection it gives the explicit face-to-face compatibility argument relevant to~\cite{HKK}; no numerical regularity observation is being treated as a theorem.

\paragraph{Practical implication.}
The counterexamples show that the geometric longest-edge rule alone cannot guarantee uniform element quality, even when conformity is preserved and the maximal mesh diameter tends to zero. Consequently, applications requiring shape-regular meshes must supplement this rule with an additional edge-marking or edge-selection restriction. Such a restriction should preferably be local, deterministic, and based on only a small amount of inherited information; for $n\ge3$, resolving only equal-length choices is insufficient because the degenerating sequence has a uniquely longest edge at every step. More elaborate tetrahedral bisection frameworks with established regularity properties are available. For example, Arnold, Mukherjee and Pouly~\cite{AMP} prove finiteness of the generated similarity classes and controlled conforming closure, using a dedicated data structure and refinement procedure. Vassilevski, Danilov, Lipnikov and Chugunov~\cite[Section~4.3, p.~105]{VDLC16} explicitly follow the approach of Arnold, Mukherjee and Pouly, describing it as a generalization of B\"ansch's tetrahedral bisection method~\cite{Bansch91}. Their algorithmic presentation records a refinement edge, two marked edges, and a binary flag for each tetrahedron. For tagged simplicial bisection in arbitrary dimension, Stevenson~\cite{Stevenson08} proves termination of conforming completion and bounds the total number of simplices introduced by recurrent marking and completion by a constant multiple of the number marked for refinement. His construction uses prescribed tagging rules and a suitably tagged initial partition, or a preliminary refinement that produces one; it is therefore a robust marked alternative rather than a regularity result for unrestricted geometric longest-edge bisection. Thus, the available guarantees require more inherited structure than the pure geometric longest-edge rule, but that structure is local and finite. This motivates the search for a minimal modification that excludes the degenerating mechanism while retaining the simplicity and transparency of longest-edge refinement. Until such a result is available, one may instead use a controlled tetrahedral bisection procedure with established finiteness and shape-quality guarantees, such as that of Liu and Joe~\cite{LJ}; as noted above, their prescribed edge need not be the actual longest edge.

It is worth mentioning that a similar caution applies to another tetrahedral refinement technique, namely red refinement: certain choices for subdividing the central octahedron can produce degenerating families and loss of uniform interpolation control~\cite{KKK22}.

The counterexamples concern degeneration as the number of refinement steps tends to infinity and do not imply that a finite number of refinement steps is unsuitable for a given application. Moreover, anisotropic applications may benefit from thin elements with suitable orientation and aspect ratios; see Dolej\v{s}\'i and May~\cite[pp.~2--3 and Section~5.2]{DM22} for the motivation and the optimization of tetrahedral element shape and orientation. Such deliberately adapted anisotropy must be distinguished from uncontrolled degeneration. However, the present results provide no guarantee that longest-edge refinement produces application-adapted anisotropic meshes.

\section{Future research}
\subsection{Higher dimensions}
The ambient dimension in the construction is three. Extending the same explicit mechanism based on a sequence of descendants to arbitrary dimension would require checking the ordering of all additional edges and the resulting higher-dimensional angle behavior; adjoining extra vertices does not automatically preserve the longest-edge itinerary. Higher-dimensional degeneration is already known in several settings. For $n\ge4$, it is addressed in~\cite{SM15,KPSM19}. For bisection, Adiprasito, Kalmanovich and Solomon~\cite[Theorem~1.3]{AKS} establish an open set of four-dimensional simplices admitting degenerating branches with uniquely longest edges. Their Corollary~1.4 further shows that, for simplices with independent standard Gaussian vertices, the probability of admitting a degenerating branch tends to one as the dimension grows. These statements concern the existence of a bad branch, not degeneration along every branch. A further extension of the present work should therefore focus on the common $n$-section mechanism, particularly trisection, and on explicitly formulated higher-dimensional angle conclusions. We leave that investigation outside the scope of this paper.

\subsection{Mesh improvement and the distribution of element quality}
Mesh improvement under related longest-edge subdivision schemes has been studied by Plaza, Su\'arez and coauthors. Their work includes the four-triangle longest-edge partition~\cite{PSPFA04} and a hybrid scheme combining longest-edge and self-similar subdivision~\cite{PSC07}. These studies analyse the evolution of triangle shapes and mesh improvement for the specified schemes. In three dimensions, Plaza, Padr\'on and Su\'arez~\cite{PPS} provide numerical evidence on element quality under the eight-tetrahedra longest-edge partition. These contributions concern distinct subdivision procedures and do not assert uniform shape regularity for unrestricted tetrahedral longest-edge bisection.

The degeneration established here concerns uniform regularity over all elements. It does not determine the asymptotic volume distribution of element quality. In the plane, Stynes~\cite[Corollary~2]{Stynes80} established concentration of area in a distinguished family of shapes. Kalmanovich and Solomon~\cite[Theorem~1]{KS26} quantified this behavior through exponential decay of the area fraction outside terminal quadruples and convergence of shape distributions along even and odd refinement levels. For each fixed initial triangle, the fraction outside the terminal quadruples is $O(\xi^j)$ after $j$ uniform bisection levels, with some $0<\xi<1$ depending on the initial triangle. The even and odd limiting distributions need not coincide. Their proof uses a finite directed graph of similarity classes and spectral analysis of the associated transition operator.

These are distributional results, rather than monotone quality improvement of every child. In three dimensions, neither the existence of a degenerating branch nor the occurrence of infinitely many similarity classes alone determines the volume fraction occupied by poor-quality elements. Indeed, the nondegenerating infinite-class branches in~\cite[Theorem~1.6]{AKS} show why a direct use of the planar finite-class framework cannot be assumed. Motivated by these distinctions, we intend to investigate worst-element regularity and the volume distribution of element quality separately for tetrahedral longest-edge refinement in subsequent work.

\end{document}